
\documentclass[letterpaper, 10 pt, conference]{ieeeconf}  

\IEEEoverridecommandlockouts                              

\overrideIEEEmargins                                      

\usepackage[pdftex]{graphicx}          

\usepackage{epsfig} 
\usepackage{amsfonts}
\usepackage{amsmath,amssymb}
\usepackage{comment}
\usepackage{color}
\usepackage{cases}
\usepackage{ascmac}
\usepackage[subrefformat=parens]{subcaption}
\usepackage{cuted}
\usepackage{enumerate}
\usepackage{cite}

\usepackage{stmaryrd}

\allowdisplaybreaks[1]

\makeatletter
\renewcommand{\p@enumi}{A}
\makeatother

\newtheorem{problem}{Problem}
\newtheorem{proposition}{Proposition}

\newtheorem{lemma}{Lemma}
\newtheorem{theorem}{Theorem}


\newcommand{\calG}{{\mathcal G}}    
\newcommand{\calH}{{\mathcal H}}

\newcommand{\calT}{{\mathcal T}}

\newcommand{\calX}{{\mathcal X}}



\newcommand{\bbR}{{\mathbb R}}

\newcommand{\bbX}{{\mathbb X}}    
    
\newcommand{\bbZ}{{\mathbb Z}}

\newcommand{\sfd}{{\sf d}}
\newcommand{\sfe}{{\sf e}}


\newcommand{\bbra}[1]{\ensuremath{[\![#1]\!]} }  

\newcommand{\dhil}{{d_\calH}}

\newcommand{\diagbox}{\boxbslash}

\newcommand{\magenta}[1]{{\color{black}{#1}}} 
\newcommand{\blue}[1]{{\color{black}{#1}}} 
\newcommand{\rev}[1]{{\color{black}{#1}}} 
\newcommand{\fin}[1]{{\color{black}{#1}}} 
\newcommand{\red}[1]{{\color{black}{#1}}} 
\newcommand{\modi}[1]{{\color{black}{#1}}} 




\title{\LARGE \bf
Sinkhorn MPC: Model predictive optimal transport over\\dynamical systems*
}

\author{Kaito Ito$^{1}$, {\it Student Member, IEEE}, and Kenji Kashima$^{1}$, {\it Senior Member, IEEE}
\thanks{*This work was supported in part by \fin{the joint project of Kyoto University and Toyota Motor Corporation, titled ``Advanced Mathematical Science for Mobility Society'', and} by JSPS KAKENHI Grant Numbers JP21J14577, JP21H04875.}
\thanks{$^{1}$K. Ito and K. Kashima are with the Graduate School of Informatics, Kyoto University,
	Kyoto, Japan
        {\tt\small ito.kaito@bode.amp.i.kyoto-u.ac.jp; kk@i.kyoto-u.ac.jp}}%
\thanks{$ \copyright $ 2022 IEEE. Personal use of this material is permitted. Permission from IEEE must be obtained for all other uses, in any current or future media, including reprinting/republishing this material for advertising or promotional purposes, creating new collective works, for resale or redistribution to servers or lists, or reuse of any copyrighted component of this work in other works.}
}

\begin{document}

\maketitle
\thispagestyle{empty}
\pagestyle{empty}

\begin{abstract}
We consider the optimal control problem of steering an agent population to a desired distribution over an infinite horizon.
This is an optimal transport problem over a dynamical system, which is challenging due to its high computational cost.
In this paper, we propose {\em Sinkhorn MPC}, which is a dynamical transport algorithm combining model predictive control and the so-called Sinkhorn algorithm.
The notable feature of the proposed method is that it achieves cost-effective transport in real time by performing control and transport planning simultaneously.
In particular, for linear systems \fin{with an energy cost}, we reveal the fundamental properties of Sinkhorn MPC such as ultimate boundedness and asymptotic stability.
\end{abstract}

\section{Introduction}\label{sec:intro}
The problem of controlling a large number of agents has become a more and more important area in control theory with a view to applications in sensor networks, smart grids, intelligent transportation systems, and systems biology, to name a few.
One of the most fundamental tasks in this problem is to stabilize a collection of agents to a desired distribution shape with minimum cost.
This can be formulated as an optimal transport (OT) problem~\cite{Villani2003} between the empirical distribution based on the state of the agents and the target distribution over a dynamical system.

In \cite{Chen2017,Chen2018,Bakshi2020,de2021}, infinitely many agents are represented as a probability density of the state of a single system, and the problem of steering the state from an initial density to a target one with minimum energy is considered.
Although this approach can avoid the difficulty due to the large scale of the collective dynamics, in this framework, the agents must have the identical dynamics, and they are considered to be indistinguishable. 
In addition, even for linear systems, the density control requires us to solve a nonlinear partial differential equation such as the Monge-Amp\`{e}re equation or the Hamilton-Jacobi-Bellman equation.
\magenta{Furthermore, it is difficult to incorporate state constraints due to practical requirements, such as safety, into the density control.}

With this in mind, we rather deal with the collective dynamics directly without taking the number of agents to infinity.
\rev{This straightforward approach is investigated in multi-agent assignment problems; see e.g., \cite{Yu2014,Mosteo2017} and references therein.
In the literature, homogeneous agents following single integrator dynamics and easily computable assignment costs, e.g., distance-based cost, are considered in general.
On the other hand, for more general dynamics,} the main challenge of the assignment problem is the large computation time for obtaining the minimum cost of stabilizing each agent to each target state (i.e., point-to-point optimal control (OC)) and optimizing the destination of each agent.
This is especially problematic for OT in dynamical environments where control inputs need to be determined immediately for given initial and target distributions.

For the point-to-point OC, the concept of model predictive control (MPC) solving a finite horizon OC problem instead of an infinite horizon OC problem is effective in reducing the computational cost \magenta{while incorporating constraints}~\cite{Mayne2014}.
On the other hand, in \cite{Cuturi2013}, \red{several favorable computational properties of an entropy-regularized version of OT are highlighted. In particular,} entropy-regularized OT can be solved efficiently via the Sinkhorn algorithm.
Based on these ideas, we propose a dynamical transport algorithm combining MPC and the Sinkhorn algorithm, which we call {\em Sinkhorn MPC}.
Consequently, the computational effort can be reduced substantially.
Moreover, for linear systems \fin{with an energy cost}, we reveal the fundamental properties of Sinkhorn MPC such as ultimate boundedness and asymptotic stability.

\textit{Organization:}
The remainder of this paper is organized as follows. In Section~\ref{sec:optimal_transport}, we introduce OT between discrete distributions. In Section~\ref{sec:formulation}, we provide the problem formulation.
In Section~\ref{sec:Sinkhorn_MPC}, we describe the idea of Sinkhorn MPC.
\modi{In Section~\ref{sec:example}, numerical examples illustrate the utility of the proposed method.
In Section~\ref{sec:linear_system}, for linear systems with an energy cost, we investigate the fundamental properties of Sinkhorn MPC.}
Some concluding remarks are given in Section~\ref{sec:conclusion}.

\textit{Notation:}
Let $ \bbR $ denote the set of real numbers.
The set of all positive (resp. nonnegative) vectors in $ \bbR^n $ is denoted by $ \bbR_{> 0}^n $ (resp. $ \bbR_{\ge 0}^n $). We use similar notations for the set of all real matrices $ \bbR^{m\times n} $ and integers $ \bbZ $, respectively.
The set of integers $ \{1,\ldots,N\} $ is denoted by $ \bbra{N} $.
The Euclidean norm is denoted by $ \|\cdot \| $.
For a positive semidefinite matrix $ A $, denote $\|x\|_A := (x^\top A x)^{1/2}$.
The identity matrix of size $n$ is denoted by $I_n$. The matrix norm induced by the Euclidean norm is denoted by $ \| \cdot \|_2 $.
\rev{For vectors $ x_1,\ldots,x_m \in \bbR^n $, a collective vector $ [x_1^\top \ \cdots \ x_m^\top]^\top \in \bbR^{nm} $ is denoted by $ [x_1 ; \ \cdots \ ; x_m] $.}
For $ A = [a_1  \ \cdots \ a_n] \in \bbR^{m\times n}$, we write $ {\rm vec} (A) := \rev{[a_1; \ \cdots \ ;a_n]} $.
For $ \alpha = [\alpha_1 \ \cdots \ \alpha_N ]^\top \in \bbR^N $, the diagonal matrix with diagonal entries $ \{\alpha_i\}_{i=1}^N $ is denoted by \rev{$ \alpha^\diagbox $}.
The block diagonal matrix with diagonal entries $ \{A_i\}_{i=1}^N, A_i \in \bbR^{m\times n} $ is denoted by \rev{$ \{A_i\}_i^\diagbox $}.
Especially when $ A_i = A, \forall i $, $ \{A_i\}_i^\diagbox $ is also denoted by \rev{$  A^{\diagbox,N} $}.
Let $ (M,d) $ be a metric space.
The open ball of radius $ r > 0 $ centered at $ x \in M $ is denoted by $ B_r(x) := \{y\in M : d(x,y) < r \} $.
The element-wise division of $ a, b \in \bbR_{>0}^n $ is denoted by $ a \oslash b := [a_1 /b_1 \ \cdots \ a_n/b_n]^\top$.
The $ N $-dimensional vector of ones is denoted by $ 1_N $.
The gradient of a function $ f $ with respect to the variable $ x $ is denoted by $ \nabla_x f $.
\rev{For $ x, x' \in \bbR_{> 0}^n $, define an equivalence relation $ \sim $ on $ \bbR_{> 0}^n $ by $ x \sim x' $ if and only if $ \exists r> 0, x = r x' $.}

\section{Background on optimal transport}\label{sec:optimal_transport}
Here, we briefly review OT between discrete distributions $ \mu := \sum_{i=1}^{N} a_i \delta_{x_i}, \nu := \sum_{j=1}^{M} b_j \delta_{y_j} $ where $a \in \Sigma_N := \{ p\in \bbR_{\ge 0}^N : \sum_{i=1}^{N} p_i = 1 \}, b\in \Sigma_M $, $ x_i, y_j \in \bbR^n $, and $ \delta_x $ is the Dirac delta at $ x $.
Given a cost function $ c : \bbR^n \times \bbR^n (\ni (x,y)) \rightarrow \bbR $, which represents the cost of transporting a unit of mass from $ x $ to $ y $, the original formulation of OT due to Monge seeks a map $ T : \{x_1,\ldots,x_N\} \rightarrow \{y_1,\ldots,y_M\}$ that solves
\begin{equation}\label{prob:Monge}
	\begin{aligned}
	&\underset{T}{\rm minimize} \ \sum_{i \in \bbra{N}} c(x_i, T(x_i)) \\
	&\text{subject to} \ b_j = \sum_{i : T(x_i) = y_j} a_i, \ \forall j \in \bbra{M} .
	\end{aligned}
\end{equation}
Especially when $ M = N $ and $ a = b = 1_N /N $, the optimal map $ T $ gives the optimal assignment for transporting agents with the initial states $ \{x_i\}_i $ to the desired states $ \{y_j\}_j $, \red{and the Hungarian algorithm~\cite{Kuhn1955} can be used to solve \eqref{prob:Monge}.}
\red{However, this method can be applied only to small problems because it has $ O(N^3) $ complexity.}

On the other hand, the Kantorovich formulation of OT is a linear program (LP):
\begin{equation}\label{prob:Kantorovich}
	\underset{P\in \calT(a,b)}{\rm minimize} \ \sum_{i\in \bbra{N},j\in \bbra{M}} C_{ij} P_{ij}
\end{equation}
where $ C_{ij} := c(x_i, y_j) $ and
\[
	\calT(a,b) := \{ P \in \bbR_{\ge0}^{N\times M} : P 1_M = a, \ P^\top 1_N = b   \}.
\]
A matrix $ P \in \calT(a,b) $, which is called a coupling matrix, represents a transport plan where $ P_{ij} $ describes the amount of mass flowing from $ x_i $ towards $ y_j $.
In particular, when $ M=N $ and $ a = b = 1_N/N $, there exists an optimal solution from which we can reconstruct an optimal map for \eqref{prob:Monge} \cite[Proposition~2.1]{Peyre2019}.
\red{However, similarly to \eqref{prob:Monge}, for a large number of agents and destinations, the problem~\eqref{prob:Kantorovich} with $ N M $ variables is challenging to solve.}

In view of this, \cite{Cuturi2013} employed an entropic regularization to \eqref{prob:Kantorovich}:
\begin{equation}\label{prob:Sinkhorn}
		\underset{P\in \calT(a,b)}{\rm minimize} \ \sum_{i\in \bbra{N},j\in \bbra{M}} C_{ij} P_{ij} - \varepsilon H(P),
\end{equation}
where $ \varepsilon > 0 $ and the entropy of $ P $ is defined by $H(P) := - \sum_{i,j} P_{ij} (\log (P_{ij}) - 1)$. 
Define the Gibbs kernel $ K $ associated to the cost matrix $ C = (C_{ij}) $ as
\[
	K = (K_{ij}) \in \bbR_{> 0}^{N\times M}, \ K_{ij} := \exp\left(  - C_{ij}/\varepsilon \right) .
\]
Then, a unique solution of the entropic OT \eqref{prob:Sinkhorn} has the form $ P^* =  (\alpha^*)^\diagbox K  (\beta^*)^\diagbox $ for two (unknown) scaling variables $ (\alpha^*,\beta^*) \in \bbR_{>0}^N \times \bbR_{>0}^M $. The variables $ (\alpha^*, \beta^*) $ can be efficiently computed by the Sinkhorn algorithm:
\begin{equation}\label{eq:Sinkhorn_algorithm}
	\alpha(k+1) = a \oslash [K\beta(k)], \ \beta(k) = b \oslash [K^\top \alpha(k)]
\end{equation}
where
\[
	\lim_{k\rightarrow \infty} \alpha(k+1)^\diagbox K \beta  (k)^\diagbox  =  P^*, \ \forall \alpha(0) = \alpha_0 \in \bbR_{>0}^N. 
\]

\rev{Now, let us introduce Hilbert's projective metric}
\begin{equation}\label{eq:hilbert_metric}
	\dhil (\beta, \beta') := \log \max_{i,j\in \bbra{M}} \frac{\beta_i \beta'_j}{\beta_j \beta'_i}, \ \beta, \beta' \in \bbR_{>0}^M ,
\end{equation}
\rev{which is a distance on the projective cone $ \bbR_{>0}^M / {\sim} $ (see the Notation in Section~\ref{sec:intro} for $ \sim $)} and is useful for the convergence analysis of the Sinkhorn algorithm; see~\cite[Remark~4.12 \rev{and 4.14}]{Peyre2019}.
Indeed, for any $ (\beta, \beta') \in (\bbR_{> 0}^M )^2 $ and any $ \bar K \in \bbR_{>0}^{N\times M} $, it holds
\begin{equation}\label{ineq:hilbert_metric}
		\dhil (\bar K\beta, \bar K\beta') \le \lambda (\bar K) \dhil (\beta, \beta')
\end{equation}
where
\begin{equation*}
	\lambda (\bar K) := \frac{\sqrt{\eta(\bar K)} -1}{\sqrt{\eta(\bar K)} + 1} < 1,\ \eta(\bar K) := \max_{i,j,k,l} \frac{\bar{K}_{ik} \bar{K}_{jl}}{\bar{K}_{jk} \bar{K}_{il}} .
\end{equation*}
Then it follows from \eqref{ineq:hilbert_metric} that
\begin{align*}
	\dhil &(\beta(k+1), \beta^*) = \dhil (b\oslash [K^\top \alpha(k+1)], b\oslash [K^\top \alpha^*]) \\
	&=\dhil (K^\top \alpha(k+1), K^\top \alpha^*) \\
	&\le \lambda (K) \dhil (\alpha (k+1), \alpha^*) \le \lambda^2 (K) \dhil (\beta(k), \beta^*)
\end{align*}
which implies $ V_P (\beta) := \dhil (\beta, \beta^*) $ is a Lyapunov function of \eqref{eq:Sinkhorn_algorithm}, and $ \lim_{k\rightarrow \infty} \beta (k) = \beta^* \in \bbR_{>0}^M / {\sim} $.

\section{Problem formulation}\label{sec:formulation}
In this paper, we consider the problem of \rev{stabilizing agents efficiently to a given discrete distribution over dynamical systems.
This can be formulated as Monge's OT problem.
\begin{problem}
	\label{prob:main}
	Given initial and desired states $\{x_i^0\}_{i=1}^N, \{x_j^\sfd\}_{j=1}^N  \in (\mathbb{R}^{n})^N$,
	find control inputs $\{u_i\}_{i=1}^N$ and a permutation $ \sigma : \bbra{N} \rightarrow \bbra{N} $ that solve
	\begin{align}
		&\underset{\sigma }{\rm minimize} ~~ \sum_{i\in \bbra{N}} c_\infty^i (x_i^0, x_{\sigma(i)}^\sfd ) .  \label{eq:infinite_horizon_cost}
	\end{align}
	Here, the cost function $ c_\infty^i $ is defined by
	\begin{align}
		& &&\hspace{-1cm} c_\infty^i (x_i^0, x_j^\sfd) := \min_{u_i} \ \sum_{k=0}^\infty  \ell_i (x_i (k), u_i (k) ; x_j^\sfd) \label{eq:value_infty}\\
		&\hspace{-0.3cm} \text{subject to} &&\red{x_i (k+1) = A_i x_i (k) + B_i u_i (k)}, \label{eq:nonlinear_dynamics}\\
		& &&x_i (k) \in \calX \subseteq \bbR^n , \ \forall k \in \bbZ_{\ge 0}, \label{eq:state_constraint} \\ 
		& &&x_i(0) = x_{i}^0, \label{eq:constraint_initial} \\
		& &&\lim_{k\rightarrow \infty} x_i(k) =  x_j^\sfd,\label{eq:constraint_infty}
	\end{align}
	where $ x_i(k) \in \bbR^n, u_i(k) \in \bbR^m, A_i\in \bbR^{n\times n}, B_i \in \bbR^{n\times m} $.
	\hfill $ \triangle $
\end{problem}
}
Note that the running cost $ \ell_i $ depends not only on the state $ x_i $ and the control input $ u_i $, but also on the destination $ x_{j}^\sfd $. \rev{Throughout this paper, we assume the existence of an optimal solution of OC problems. \red{In addition, we assume that there exists a constant input $ \bar{u}_i $ under which $ x_i = x_j^\sfd $ is an equilibrium of \eqref{eq:nonlinear_dynamics}.}
\red{A necessary condition for the infinite horizon cost $ c_\infty^i (x_i^0, x_j^\sfd) $ to be finite is that at $ x_i = x_j^\sfd $ with $ u_i = \bar{u}_i $, there is not a cost incurred, i.e., $ \ell_i(x_j^\sfd, \bar{u}_i ; x_j^\sfd) = 0 $.}
If $ B_i $ is square and invertible, $ \bar{u}_i = B_i^{-1} ( x_j^\sfd - A_ix_j^\sfd) $ makes $ x_i = x_j^\sfd $ an equilibrium.
}

\section{Combining MPC and Sinkhorn algorithm}\label{sec:Sinkhorn_MPC}
The main difficulties of Problem~\ref{prob:main} are as follows:
\begin{enumerate}
	\item In most cases, the infinite horizon OC problem $ c_\infty^{\magenta{i}} (x_i^0,x_j^\sfd) $ is computationally intractable.
	\item \red{In addition, given $ c_\infty^{\magenta{i}} (x_i^0, x_j^\sfd), \forall i,j\in \bbra{N} $, the assignment problem needs to be solved, which leads to the high computational burden when $ N $ is large.}
\end{enumerate}
To overcome these issues, we utilize the concept of MPC, \magenta{which solves tractable finite horizon OC while satisfying the state constraint~\eqref{eq:state_constraint}.}
Specifically, at each time, we address the OT problem whose cost function with the current state $ \check{x}_i $ and the finite horizon $ T_h \in \bbZ_{>0} $ is given by
\begin{align*}
	c_{T_h}^{\magenta{i}} (\check{x}_i, x_j^\sfd) := &\min_{u_i} \ \sum_{k=0}^{T_h-1} \ell_i (x_i(k), u_i (k); x_j^\sfd) \\
	&\text{subj. to  \eqref{eq:nonlinear_dynamics}, \magenta{\eqref{eq:state_constraint}},}~~ x_i(0) = \check{x}_i, \  x_i (T_h) = x_{j}^\sfd .
\end{align*}
Denote the first control in the optimal sequence by $ u_i^{\rm MPC} (\check{x}_i, x_j^\sfd) $.
From the viewpoint of the computation time for solving the OT problem, we relax the Problem~\ref{prob:main} by the entropic regularization.
It should be noted that, in challenging situations in which the number of agents is large and the sampling time is small, only a few Sinkhorn iterations are allowed.
In what follows, we deal only with the case where {\em just one} Sinkhorn iteration is performed at each time. Nevertheless, by similar \rev{arguments}, all \rev{of} the results of this paper are still valid when more iterations are performed.

Based on the approximate solution $ P(k) $ obtained by the Sinkhorn algorithm at time $ k $, we \rev{need} to determine a target state for each agent.
Then, we introduce a set $ \bbX \subset \bbR^n $ and a map $ x_{\rm tmp}^{\sfd,i} : \bbR_{\ge 0}^{N\times N} \rightarrow \bbX $ as a policy to determine a temporary target $ x_{\rm tmp}^{\sfd,i} (P(k)) $ at time $ k $ for agent $ i $.
Hereafter, assume that there exists a constant $ r_{\rm upp} > 0 $ such that
\begin{equation}\label{ineq:x_upp}
	\|x \| \le  r_{\rm upp} , \ \forall x \in \bbX .
\end{equation}
For example, if $ \bbX $ is the convex hull of $ \{x_j^\sfd\}_j $, we can take $ r_{\rm upp} = \max_j \|x_j^\sfd\| $. A typical policy to approximate a Monge's OT map from a coupling matrix $ P $ is the so-called barycentric projection $ x_{\rm tmp}^{\sfd,i} (P) = N \sum_{j=1}^{N} P_{ij} x_j^\sfd $~\cite[Remark~4.11]{Peyre2019}.

\rev{For any given policy $ x_{\rm tmp}^{\sfd, i} $,} we summarize the above strategy as the following dynamics where the Sinkhorn algorithm behaves as a dynamic controller.\\
{\bf Sinkhorn MPC:}
\begin{align}
&x_i(k+1) = A_i x_i (k) + B_i u_i^{\rm MPC} \bigl(x_i (k), x_{\rm tmp}^{\sfd,i} \left(P(k) \right)  \bigr),\nonumber\\
&\hspace{6cm} \forall i \in \bbra{N}, \label{eq:SMPC_x} \\
&P(k) = \alpha(k+1)^\diagbox K(x(k)) \beta(k)^\diagbox, \label{eq:SMPC_P} \\
&\alpha(k+1) = 1_N/N \oslash \left[K(x(k))\beta(k) \right], \label{eq:SMPC_alpha} \\
&\beta(k) = 1_N/N \oslash \left[K(x(k))^\top \alpha(k) \right], \label{eq:SMPC_beta}   \\
&x_i (0) = x_i^0, \ \alpha(0) = \alpha_0, \nonumber 
\end{align}
where
\begin{align*}
K_{ij}(x) := \exp\left(- \frac{c_{T_h}^{\magenta{i}} (x_i, x_{j}^\sfd )}{\varepsilon} \right), \ x = [x_1;  \cdots  ;x_N] \in \bbR^{nN} ,
\end{align*}
and the initial value $ \alpha_0 $ is arbitrary.
\hfill $ \triangle $
\\
\noindent
Note that, \rev{under the assumption that for all $ i\in \bbra{N} $, $ B_i $ is square and invertible, Sinkhorn MPC is obviously recursively feasible, and} the dynamics~\eqref{eq:SMPC_x} satisfies the state constraint~\eqref{eq:state_constraint} for all $ i\in \bbra{N} $.

\section{Numerical examples}\label{sec:example}
This section gives examples for Sinkhorn MPC \modi{with an energy cost
\begin{equation}\label{eq:energy}
	\ell_i (x_i, u_i; x_j^\sfd) = \|u_i - B_i^{-1} (x_j^\sfd - A_i x_j^\sfd)\|^2 ,
\end{equation}
and $ \calX = \bbR^n $.}
\modi{Then, the dynamics under Sinkhorn MPC can be written as follows \cite[\rev{Section~2.2, pp.~37-39}]{Lewis2012}:
\begin{align}
&x_i(k+1) = \bar{A}_i x_i (k) +  (I - \bar{A}_i) x_{\rm tmp}^{\sfd,i}(P(k)), \label{eq:SMPC_linear_x} \\
&u_i^{\rm MPC} (x_i, \red{\hat{x}}) = - B_i^\top (A_i^\top)^{T_h - 1} G_{i,T_h}^{-1} A_i^{T_h} (x_i - \red{\hat{x}} ) \nonumber\\
&\hspace{2.5cm} + B_i^{-1} ( \red{\hat{x}} - A_i \red{\hat{x}}), \ \forall i \in \bbra{N}, \ \red{\forall \hat{x}\in \bbR^n} \nonumber
\end{align}
with \eqref{eq:SMPC_P}--\eqref{eq:SMPC_beta} where
\begin{align*}
&K_{ij}(x)  = \exp \left(- \frac{  \|x_i - x_j^\sfd\|_{\calG_i}^2 }{\varepsilon}   \right), \\
&\calG_i := (A_i^{T_h})^\top G_{i,T_h}^{-1}  A_i^{T_ h}, \ G_{i,T_h} := \sum_{k=0}^{T_h -1} A_i^k B_i B_i^\top (A_i^\top)^k, \\
&\bar{A}_i := A_i - B_i B_i^\top (A_i^\top)^{T_h -1} G_{i,T_h}^{-1} A_i^{T_h} .
\end{align*}
In the examples below, we use the barycentric target $ x_{\rm tmp}^{\sfd,i} ( P) = N \sum_{j=1}^{N} P_{ij} x_j^\sfd $.}

First, consider the case where
\begin{equation}\label{eq:ex2d}
	A_i = 
	\begin{bmatrix}
		1.2 & 0.13\\
		-0.05 & 1.1
	\end{bmatrix}, \
	B_i = 0.1 I_2, \ \forall i\in \bbra{N},
\end{equation}
and set $ N = 150, \ \varepsilon = 1.0, \ T_h = 10, \ \alpha_0 = 1_N $.
For given initial and desired states, the trajectories of the agents governed by \eqref{eq:SMPC_linear_x} with \eqref{eq:SMPC_P}--\eqref{eq:SMPC_beta} are illustrated in Fig.~\ref{fig:trajectory}.
It can be seen that the agents converge sufficiently close to the target states.
The computation time for one Sinkhorn iteration is about $0.0063$~ms, $0.030$~ms, and $ 0.08 $~ms for $ N = 150, 500, 800 $, respectively, with MacBook~Pro with Intel~Core~i5. 
On the other hand, solving the linear program \eqref{prob:Kantorovich} with MATLAB {\tt linprog} takes about $0.12$~s, $6.4$~s, and $ 66 $~s for $ N = 150, 500, 800 $, respectively, and is thus not scalable.
Hence, Sinkhorn MPC contributes to reducing the computational burden.

\begin{figure}[t]
	\begin{minipage}[b]{0.48\linewidth}
		\centering
		\includegraphics[keepaspectratio, scale=0.24]
		{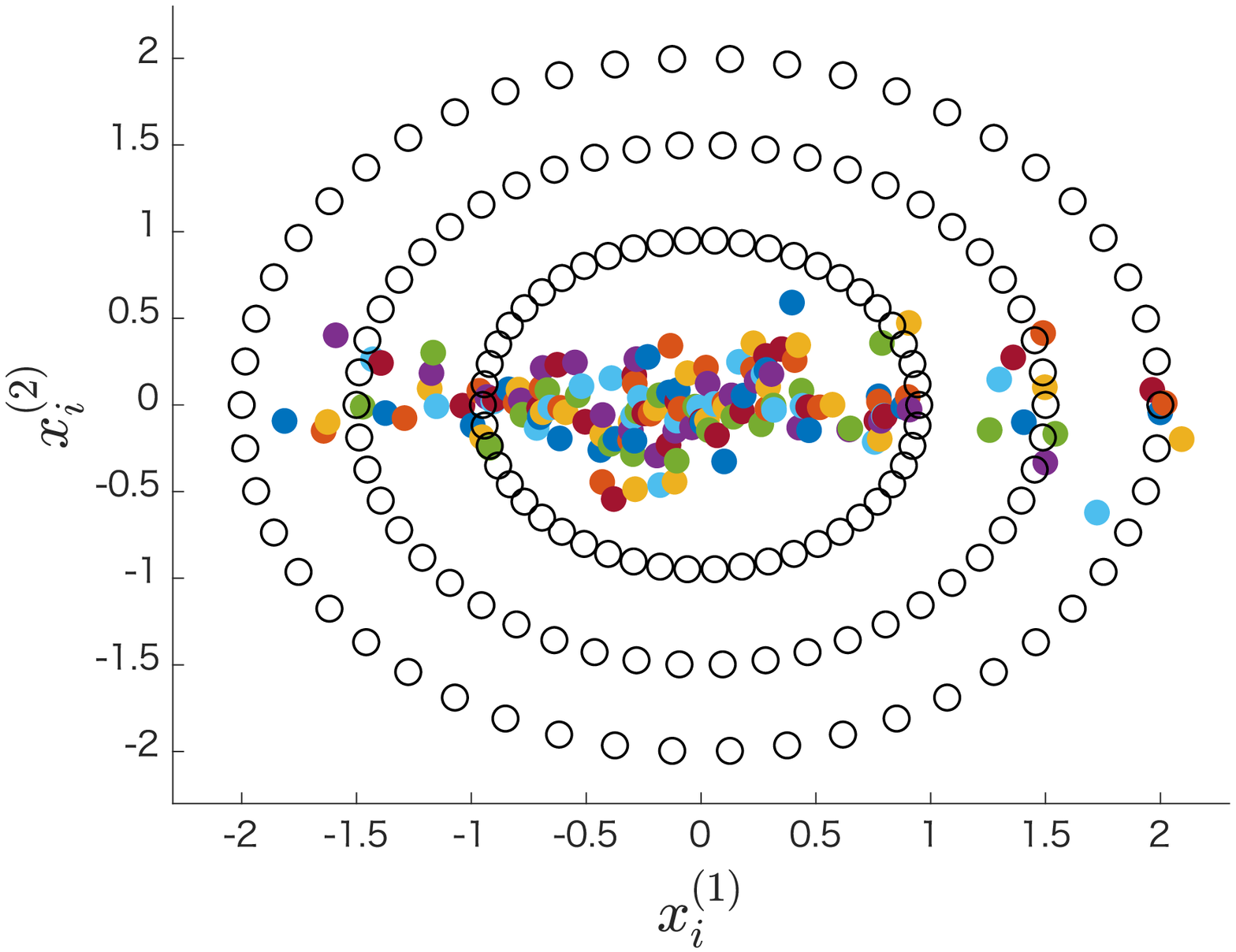}
		\subcaption{$ k=0 $}
	\end{minipage}
	\vspace{0.5cm}
	\begin{minipage}[b]{0.5\linewidth}
		\centering
		\includegraphics[keepaspectratio, scale=0.23]
		{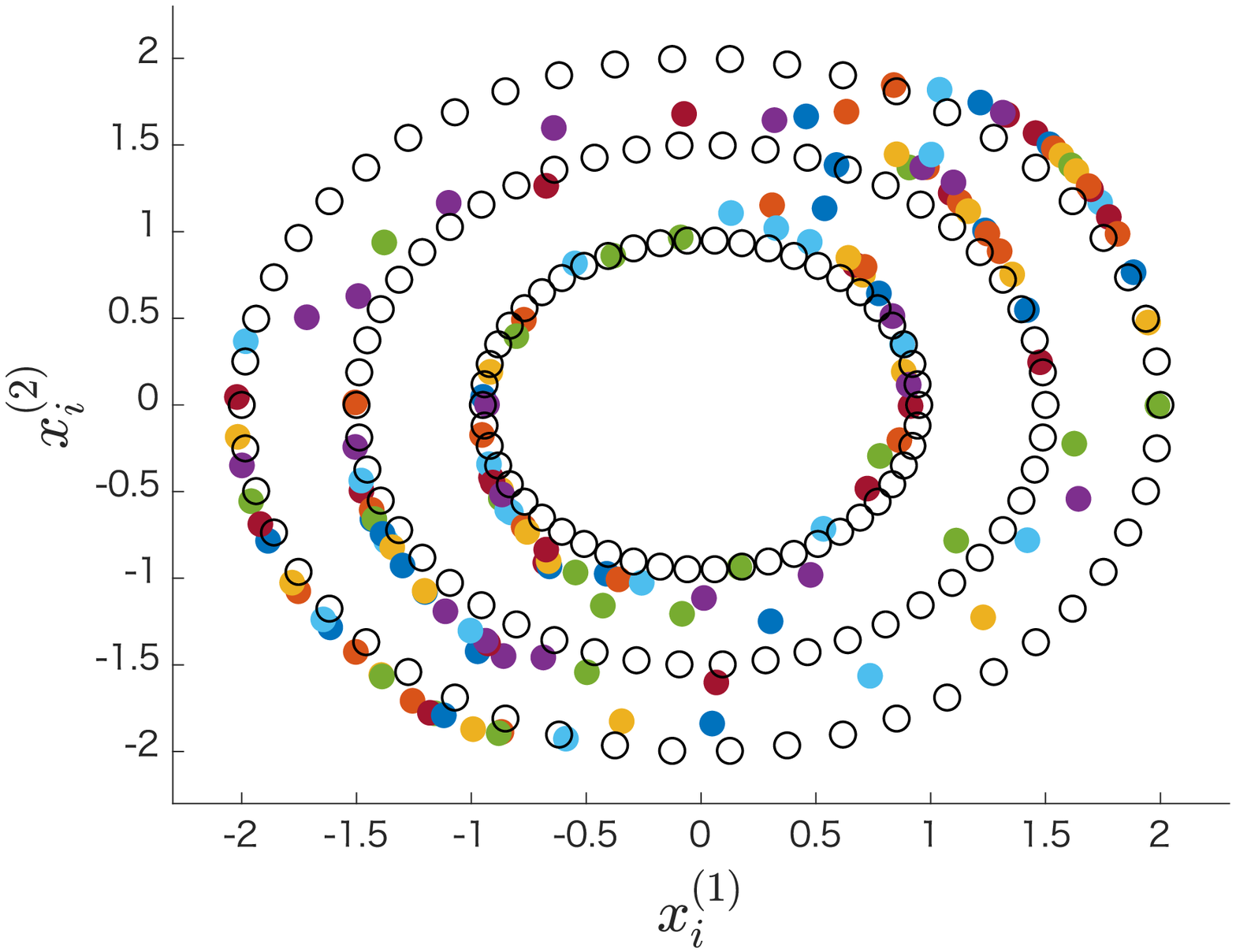}
		\subcaption{$ k=50 $}
	\end{minipage}
	\vspace{0.5cm}
	\begin{minipage}[b]{0.48\linewidth}
		\centering
		\includegraphics[keepaspectratio, scale=0.23]
		{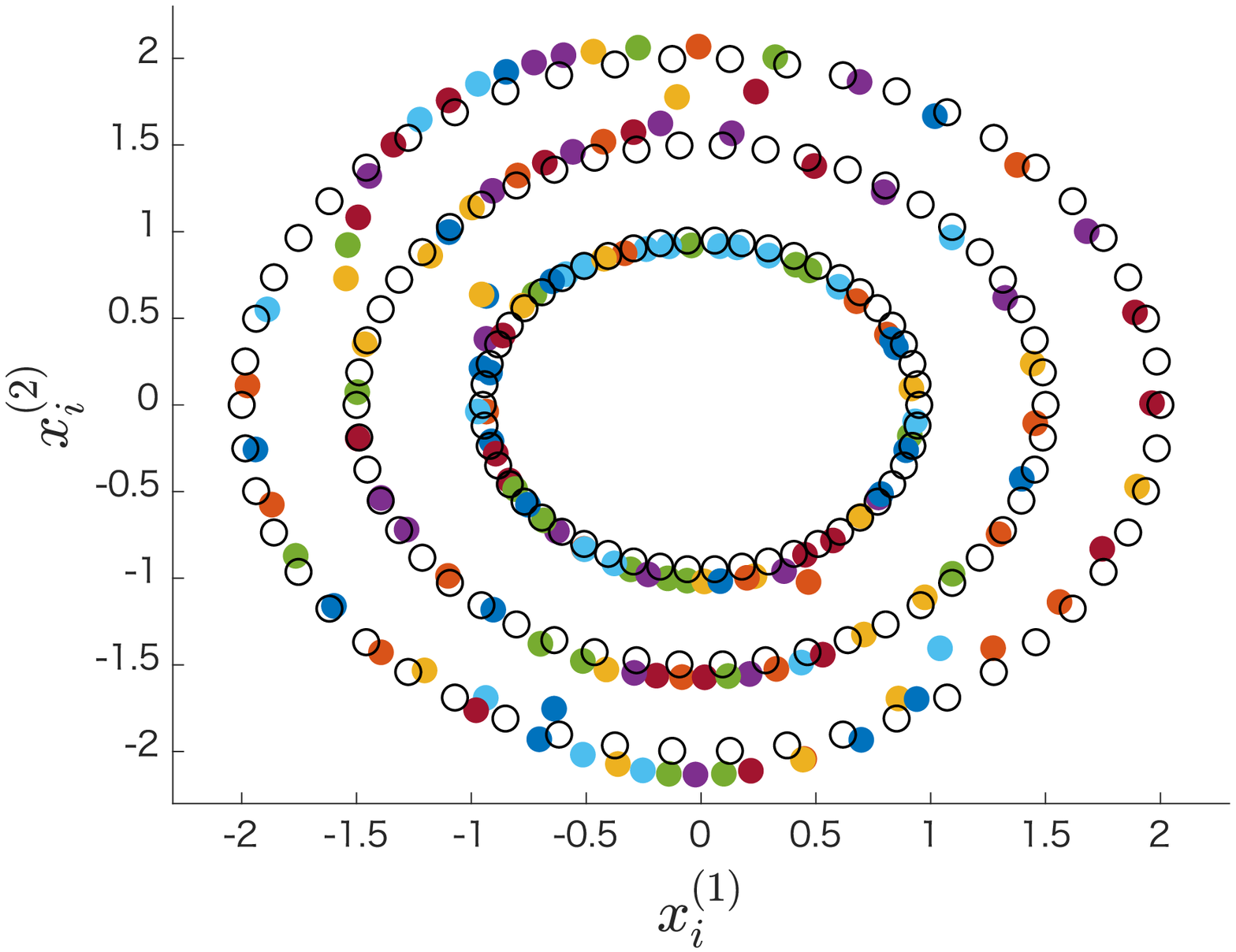}
		\subcaption{$ k=200 $}
	\end{minipage}
	\begin{minipage}[b]{0.5\linewidth}
		\centering
		\includegraphics[keepaspectratio, scale=0.23]
		{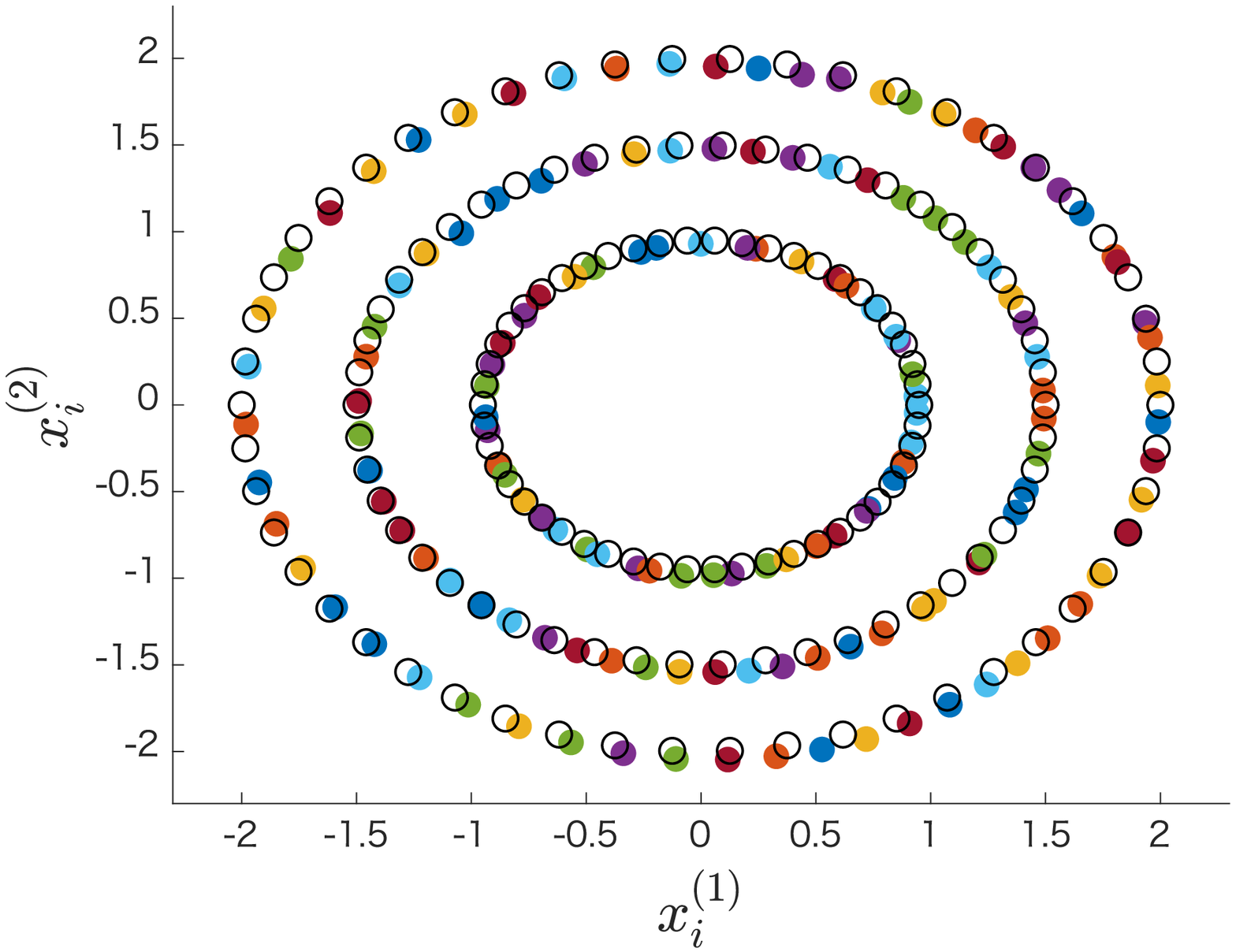}
		\subcaption{$ k=500 $}
	\end{minipage}
	
	\begin{minipage}[b]{0.48\linewidth}
		\centering
		\includegraphics[keepaspectratio, scale=0.23]
		{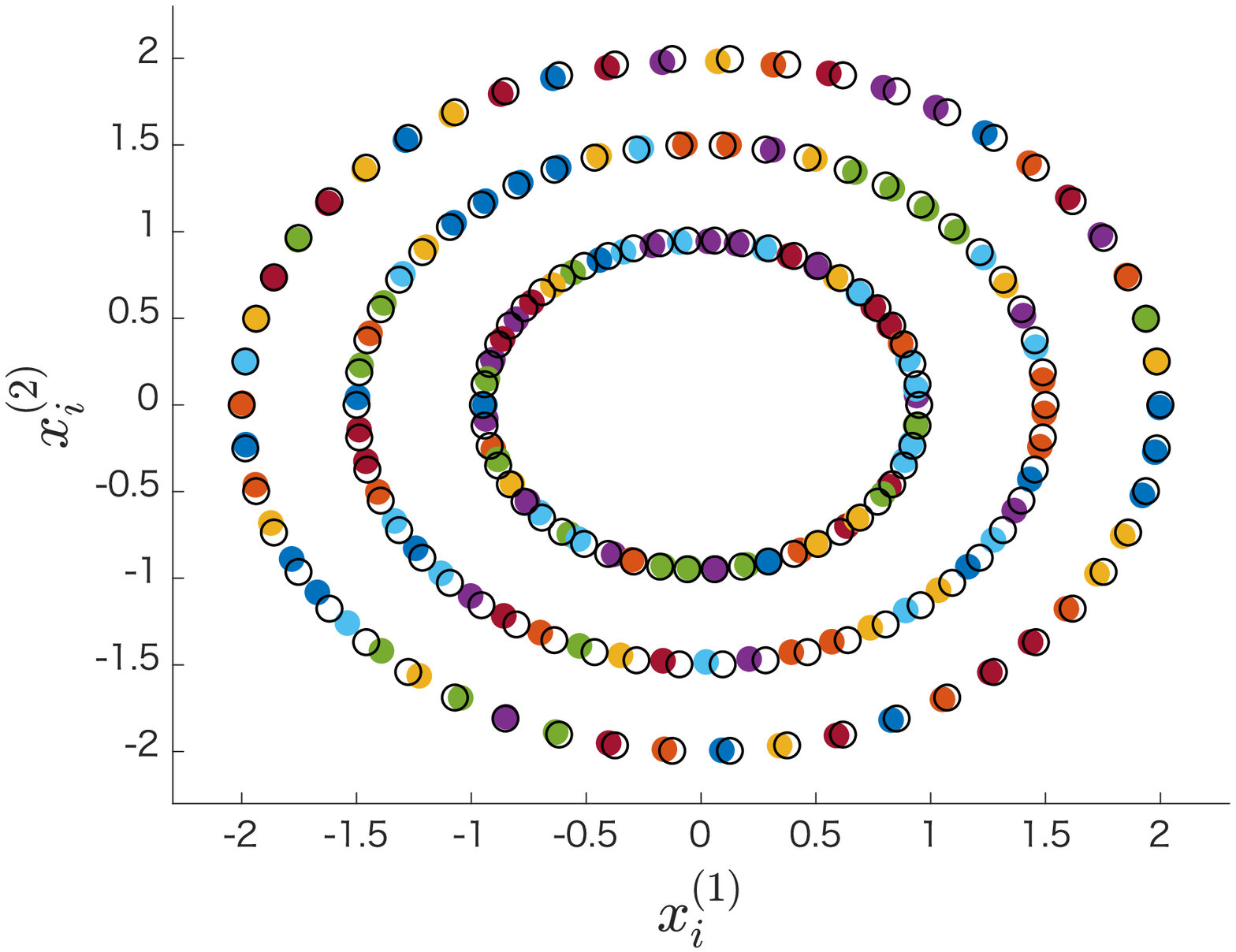}
		\subcaption{$ k=1000 $}
	\end{minipage}
	\begin{minipage}[b]{0.5\linewidth}
		\centering
		\includegraphics[keepaspectratio, scale=0.23]
		{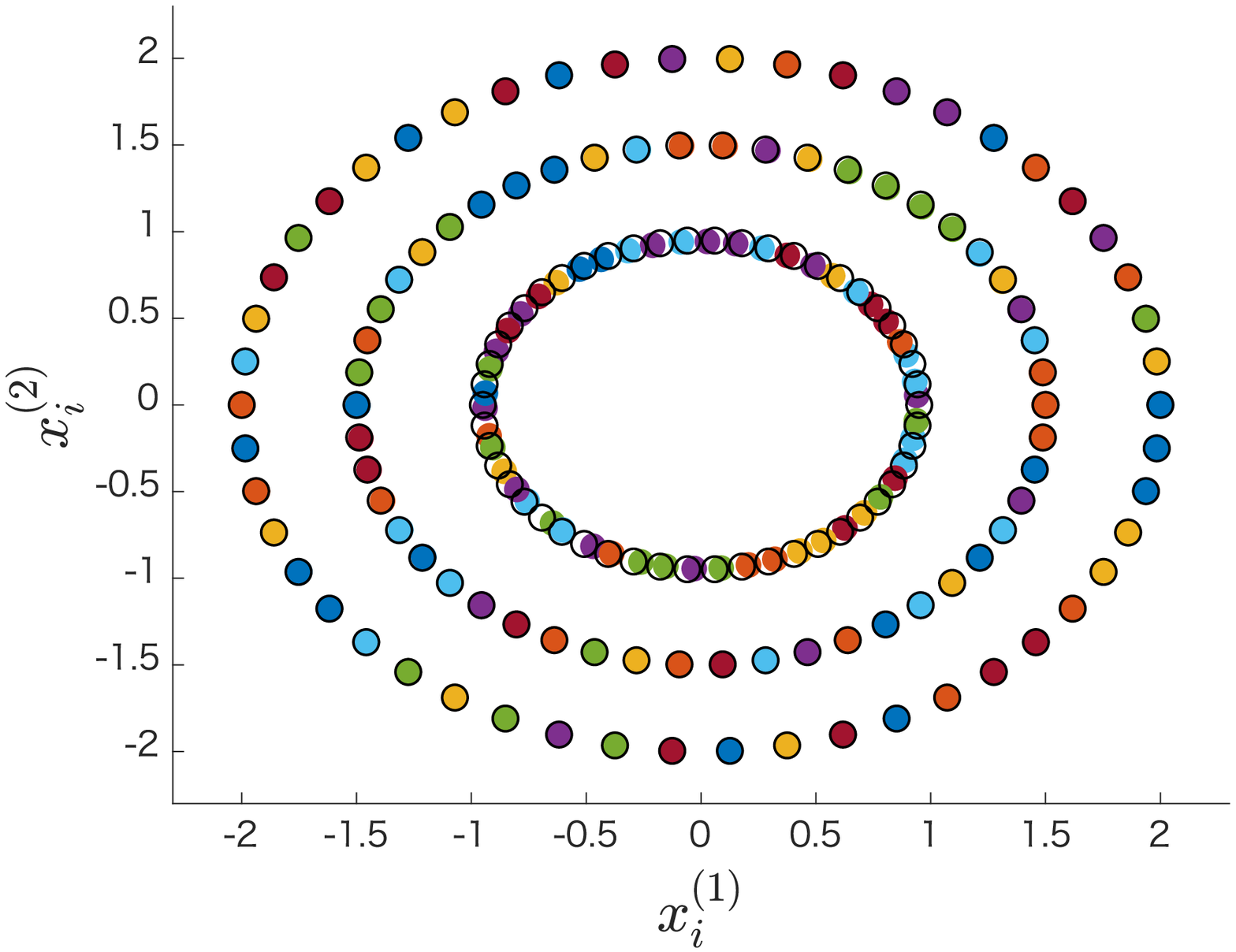}
		\subcaption{$ k=3000 $}
	\end{minipage}
	\caption{Trajectories $ x_i(k) = [x_i^{(1)}(k) \ x_i^{(2)} (k)]^\top $ of 150 agents for \eqref{eq:ex2d} (colored filled circles) and desired states (black circles).}\label{fig:trajectory}
\end{figure}

Next, we investigate the effect of the regularization parameter $ \varepsilon $ on the behavior of Sinkhorn MPC. To this end, consider a simple case where $ N = 10, \ T_h = 10 $, and
\begin{equation}\label{eq:ex1d}
	A_i = 1, \ B_i = 0.1, \ \forall i \in \bbra{N}.
\end{equation}
Then the trajectories of the agents with $ \varepsilon = 0.5, 1.0 $ are shown in Fig.~\ref{fig:ex1d}.
As can be seen, the overshoot/undershoot is reduced for larger $ \varepsilon $ while the limiting values of the states deviate from the desired states.
In other words, the parameter $ \varepsilon $ reflects the trade-off between the stationary and transient behaviors of the dynamics under Sinkhorn MPC.

\begin{figure}[tb]
	\centering
	\includegraphics[scale=0.35]{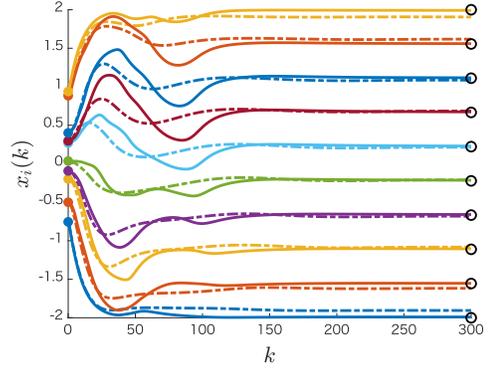}
	\caption{Trajectories of 10 agents for \eqref{eq:ex1d} with $ \varepsilon = 0.5 $ (solid) and $ \varepsilon = 1.0 $ (chain), respectively, and desired states (black circles).}
	\label{fig:ex1d}
\end{figure}

\section{Fundamental properties of Sinkhorn MPC with an energy cost}\label{sec:linear_system}
In this section, we elucidate ultimate boundedness and asymptotic stability of Sinkhorn MPC with \modi{the energy cost \eqref{eq:energy}} and $ \calX = \bbR^n $. \rev{Hereafter, we assume the invertibility of $ B_i $.}

\subsection{Ultimate boundedness for Sinkhorn MPC}\label{subsec:bounded}
It is known that, under the assumption that $ B_i $ is invertible, $ \bar{A}_i $ is stable, i.e., \blue{the spectral radius $ \rho_i $ of $ \bar{A}_i $ satisfies $ \rho_i < 1 $} \cite[Corollary~1]{Kwon1975}.
Using this fact, we derive the ultimate boundedness of \eqref{eq:SMPC_linear_x} with \eqref{eq:SMPC_P}--\eqref{eq:SMPC_beta}.

\blue{
\begin{proposition}\label{prop:bounded}
	For any $ \delta > 0, \{x_i^0\}_i $, and $ \{\nu_i\}_i $ satisfying $ \nu_i > 0, \rho_i + \nu_i < 1, \forall i $, there exists $ \tau(\delta, \{x_i^0\},\{\nu_i \} ) \in \bbZ_{>0} $ such that the solution $ \{x_i\}_i $ of \eqref{eq:SMPC_linear_x} with \eqref{eq:SMPC_P}--\eqref{eq:SMPC_beta} satisfies
	\begin{equation}\label{ineq:ultimate_bound}
		\|x_i (k) \| < \delta  +  \frac{ r_{\rm upp} \|I- \bar{A}_i\|_2}{1 - (\rho_i + \nu_i ) }  , \ \forall k \ge \blue{\tau}, \ \forall i \in \bbra{N},
	\end{equation}
	where $ r_{\rm upp} $ satisfies \eqref{ineq:x_upp}.
\end{proposition}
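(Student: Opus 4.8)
The plan is to regard the closed loop \eqref{eq:SMPC_linear_x} as a stable linear time-invariant system perturbed by a uniformly bounded exogenous signal, and then apply a geometric (input-to-state type) estimate. Setting $ w_i(k) := (I - \bar{A}_i)\, x_{\rm tmp}^{\sfd,i}(P(k)) $, the recursion \eqref{eq:SMPC_linear_x} reads $ x_i(k+1) = \bar{A}_i x_i(k) + w_i(k) $. Because $ x_{\rm tmp}^{\sfd,i}(P(k)) \in \bbX $, the radius bound \eqref{ineq:x_upp} yields $ \|w_i(k)\| \le \|I - \bar{A}_i\|_2\, r_{\rm upp} $ for every $ k $, with a constant independent of the Sinkhorn variables $ \alpha(k), \beta(k), P(k) $. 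This is the crucial structural observation: the disturbance entering the state recursion is bounded \emph{uniformly in time and uniformly over the coupling dynamics} \eqref{eq:SMPC_P}--\eqref{eq:SMPC_beta}, so the latter need not be analyzed at all for this result, and each agent can be treated separately.

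Next I would unroll the recursion, $ x_i(k) = \bar{A}_i^{\,k} x_i^0 + \sum_{j=0}^{k-1} \bar{A}_i^{\,k-1-j} w_i(j) $, take norms, and split into the homogeneous term and the forced term. The homogeneous term must be shown to decay to within $ \delta $, and the forced term must be summed to produce the steady-state bound. The engine for both is a decay estimate for the matrix powers of $ \bar{A}_i $: since the spectral radius of $ \bar{A}_i $ is $ \rho_i $ and $ \rho_i + \nu_i < 1 $, the spectral-radius formula $ \lim_{k\to\infty} \|\bar{A}_i^{\,k}\|_2^{1/k} = \rho_i $ provides, for the chosen $ \nu_i $, a geometric envelope $ \|\bar{A}_i^{\,k}\|_2 \le (\rho_i + \nu_i)^k $ valid for all sufficiently large $ k $.

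Granting such an envelope, summing the geometric series $ \sum_{m \ge 0}(\rho_i + \nu_i)^m = 1/(1 - (\rho_i + \nu_i)) $ against the uniform disturbance bound $ \|I - \bar{A}_i\|_2\, r_{\rm upp} $ reproduces exactly the steady-state term in \eqref{ineq:ultimate_bound}, while the homogeneous contribution $ \|\bar{A}_i^{\,k}\|_2 \|x_i^0\| $ tends to zero geometrically; hence for each $ i $ there is a threshold after which $ \|x_i(k)\| $ lies below the stated bound, and $ \tau(\delta, \{x_i^0\}, \{\nu_i\}) $ is obtained as the maximum of these thresholds over $ i \in \bbra{N} $. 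The dependence of $ \tau $ on $ \delta $, the initial states, and $ \{\nu_i\} $ enters only through how fast the homogeneous term is driven below $ \delta $.

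The step I expect to be the main obstacle is pinning down the steady-state constant as $ r_{\rm upp}\|I - \bar{A}_i\|_2 / (1 - (\rho_i + \nu_i)) $ without an extra multiplicative overshoot factor. The clean, all-$ k $ bound furnished by the spectral-radius formula has the form $ \|\bar{A}_i^{\,k}\|_2 \le M_i (\rho_i + \nu_i)^k $ with $ M_i \ge 1 $ when $ \bar{A}_i $ is non-normal, and a naive summation would inflate the bound by $ M_i $. Controlling this requires either applying the envelope only on the tail of the forced sum while carefully accounting for the finitely many leading terms, or passing to a norm adapted to $ \bar{A}_i $ in which the induced norm does not exceed $ \rho_i + \nu_i $ and then transferring the resulting estimate back to the Euclidean norm. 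Ensuring that this transfer does not degrade the constant $ \|I - \bar{A}_i\|_2 $ or $ r_{\rm upp} $ is the delicate bookkeeping on which the argument hinges.
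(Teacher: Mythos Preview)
Your approach is exactly the paper's: define the exogenous term $w_i(k)=(I-\bar A_i)x_{\rm tmp}^{\sfd,i}(P(k))$, bound it uniformly by $r_{\rm upp}\|I-\bar A_i\|_2$ via \eqref{ineq:x_upp}, unroll the variation-of-constants formula, and invoke Gelfand's formula to get $\|\bar A_i^k\|_2<(\rho_i+\nu_i)^k$ for $k\ge\tau_i$. The paper's proof is a three-line sketch ending in ``the desired result is straightforward from $\|x_i(k)\|\le\|\bar A_i^k\|_2\|x_i^0\|+\sum_{s=1}^k\|\bar A_i^{s-1}\|_2\|\tilde u_i(k-s)\|$'', so your write-up is in fact more detailed than the original.

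The overshoot issue you single out as the main obstacle is real, and it is precisely the point the paper sweeps under the word ``straightforward'': the forced sum always contains the low powers $\bar A_i^0,\ldots,\bar A_i^{\tau_i-1}$, for which the envelope $(\rho_i+\nu_i)^m$ need not hold, so a literal summation gives $r_{\rm upp}\|I-\bar A_i\|_2\bigl(\sum_{m<\tau_i}\|\bar A_i^m\|_2+(\rho_i+\nu_i)^{\tau_i}/(1-\rho_i-\nu_i)\bigr)$ rather than the clean constant in \eqref{ineq:ultimate_bound}. Neither of your suggested fixes (tail/head splitting, or passing to a $\nu_i$-adapted norm) removes that finite head without introducing a constant that depends on $\bar A_i$ and $\nu_i$; the paper does not address this either. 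So you have matched the paper's argument and correctly located the one place where both the paper's sketch and your write-up are not fully tight with the stated constant.
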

}
\begin{proof}
	Let $ \tilde{u}_i (k) := ( I - \bar{A}_i) x_{\rm tmp}^{\sfd,i} (P(k)) $. 
	Then, it follows from \eqref{ineq:x_upp} that
	\begin{align*}
	\| \tilde{u}_i (k) \| \le r_{\rm upp} \| I -  \bar{A}_i \|_2, \ \forall k\in \bbZ_{\ge 0} .
	\end{align*}
	\blue{Note that for any $ \nu_i > 0$, there exists $ \tau_i (\nu_i) \in \bbZ_{>0} $ such that}
	\[
		\blue{ \| \bar{A}_i^k \| < (\rho_i + \nu_i )^k, \ \forall k \ge \tau_i . }
	\]
	Hence, the desired result is straightforward from
	\begin{align*}
	\| x_i (k) \| \le \| \bar{A}_i^k \|_2 \| x_i^0 \| + \sum_{s = 1}^{k} \| \bar{A}_i^{s-1} \|_2 \| \tilde{u}_i (k-s)\| .
	\end{align*}
\end{proof}
\rev{We emphasize that Proposition~\ref{prop:bounded} holds for any policy $ x_{\rm tmp}^{\sfd, i} $ \fin{whose range $ \bbX $ satisfies \eqref{ineq:x_upp}.}}

\subsection{Existence of the equilibrium points}\label{subsec:equilibrium}
In the \rev{remainder} of this section, we focus on the barycentric target $ x_{\rm tmp}^{\sfd,i} ( P) = N \sum_{j=1}^{N} P_{ij} x_j^\sfd $. 
For $ (x,\beta) \in \bbR^{nN} \times \bbR_{>0}^N $ and $ X^\sfd := [x_1^\sfd \ \cdots \ x_N^\sfd] \in \bbR^{n\times N} $, define
\begin{align}
&f_1 (x,\beta) := \{\bar{A}_i\}_i^\diagbox x + N  \{I_n - \bar{A}_i\}_i^\diagbox  (X^\sfd)^{\diagbox,N} {\rm vec}(\tilde{P}(x,\beta)), \\
&f_2 (x,\beta) := 1_N/N \oslash \left[ K(f_1 (x,\beta))^\top (1_N/N \oslash [K(x) \beta]) \right], \\
&\tilde{P}(x,\beta) :=  \left( 1_N/N \oslash [K(x)\beta]  \right)^\diagbox K(x)  \beta^\diagbox. 
\end{align}
Then, the collective dynamics \eqref{eq:SMPC_linear_x} with \eqref{eq:SMPC_P}--\eqref{eq:SMPC_beta} is
\begin{align}
&x(k+1) = f_1 (x(k), \beta(k))  , \label{eq:x_global}\\
&\beta(k+1) = f_2(x(k), \beta(k)) , \label{eq:beta_global}
\end{align}
where $ x(k) := \rev{[x_1 (k); \ \cdots ; x_N (k)]} $.

Here, we characterize equilibria of \eqref{eq:x_global}, \eqref{eq:beta_global}. Note that the existence of the equilibria is not trivial due to the nonlinearity of the dynamics. 
A point $ x^{\sfe} = \rev{[x_1^\sfe; \ \cdots \ ;x_N^\sfe ]} \in \bbR^{nN} $ is an equilibrium if and only if
\begin{align}
&\hspace{-0.2cm}(I_n - \bar{A}_i) \biggl( x_{i}^{\sfe} - N \sum_{j=1}^{N} P_{ij}^* (x^\sfe) x_j^\sfd \biggr) = 0 , \ \forall i \in \bbra{N}, \nonumber\\
&\hspace{-0.2cm}P_{ij}^* (x) :=  \alpha_{i}^* K_{ij}(x) \beta_{j}^*,\ \alpha^*, \beta^* \in \bbR_{>0}^N,\\
&\hspace{-0.2cm}\alpha^* = 1_N/N \oslash \left[ K(x) \beta^*	\right],  \beta^* = 1_N/N \oslash \left[ K(x)^\top \alpha^*	\right]. \label{eq:optimal_beta}
\end{align}
The stability of $ \bar{A}_i $ implies that it has no eigenvalue \rev{equal to} $ 1 $, and therefore $ I_n - \bar{A}_i $ is invertible. Thus, the necessary and sufficient condition for the equilibria is given by
\begin{equation}\label{eq:equilibrium_iff}
x_{i}^{\sfe} - N\sum_{j=1}^{N} P_{ij}^* (x^\sfe) x_j^{\sfd} = 0, \ \forall i\in \bbra{N} .
\end{equation}

\begin{proposition}\label{prop:fixed_point}
	The dynamics \eqref{eq:x_global}, \eqref{eq:beta_global} has at least one equilibrium point $ (x^\sfe, \beta^\sfe) \in \bbR^{nN} \times (\bbR_{> 0}^N / {\sim}) $.
\end{proposition}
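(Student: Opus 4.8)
The plan is to recast the equilibrium condition \eqref{eq:equilibrium_iff} as a fixed-point equation for a self-map of a compact convex set and then invoke Brouwer's fixed-point theorem. For each $ x \in \bbR^{nN} $ the entropic OT problem associated with the positive matrix $ K(x) $ has a \emph{unique} optimal coupling $ P^*(x) = (\alpha^*)^\diagbox K(x) (\beta^*)^\diagbox $, so the barycentric-projection map
\[
	\Phi(x) := \rev{[\Phi_1(x); \ \cdots \ ; \Phi_N(x)]}, \quad \Phi_i(x) := N \sum_{j=1}^N P_{ij}^*(x)\, x_j^\sfd ,
\]
is well defined, and \eqref{eq:equilibrium_iff} is precisely $ x^\sfe = \Phi(x^\sfe) $. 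Since the first marginal of $ P^*(x) $ is $ 1_N/N $, we have $ N \sum_j P_{ij}^*(x) = 1 $ with $ P_{ij}^*(x) \ge 0 $, so each $ \Phi_i(x) $ is a convex combination of $ \{x_j^\sfd\}_j $. Hence $ \Phi $ maps $ \bbR^{nN} $ into $ \calC^N $, where $ \calC := \mathrm{conv}\{x_j^\sfd\}_{j=1}^N $ is compact and convex; in particular $ \Phi $ restricts to a self-map of the compact convex set $ \calC^N $.

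Next I would establish that $ \Phi $ is continuous, for which it suffices to show that $ x \mapsto P^*(x) $ is continuous. The entry $ K_{ij}(x) = \exp(-\|x_i - x_j^\sfd\|_{\calG_i}^2/\varepsilon) $ is a smooth, strictly positive function of $ x $, so it is enough to prove that the optimal coupling depends continuously on the Gibbs kernel. Using the machinery of Section~\ref{sec:optimal_transport}, the full Sinkhorn sweep $ S_K(\beta) := 1_N/N \oslash [K^\top (1_N/N \oslash [K\beta])] $ satisfies $ \dhil(S_K(\beta), S_K(\beta')) \le \lambda^2(K)\, \dhil(\beta, \beta') $ with $ \lambda(K) < 1 $, so on the complete metric space $ (\bbR_{>0}^N/{\sim}, \dhil) $ it is a contraction with a unique fixed point $ \beta^*(K) $. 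Since $ (K,\beta) \mapsto S_K(\beta) $ is continuous and the rate $ \lambda^2(K) = \bigl((\sqrt{\eta(K)}-1)/(\sqrt{\eta(K)}+1)\bigr)^2 $ is continuous in $ K $ and locally bounded below $ 1 $, the standard continuous-dependence result for fixed points of parametrized contractions yields continuity of $ K \mapsto \beta^*(K) \in \bbR_{>0}^N/{\sim} $. The resulting $ P^*(x) = (\alpha^*)^\diagbox K(x) (\beta^*)^\diagbox $ is invariant under the scaling ambiguity in $ \beta^* $ (rescaling $ \beta^* $ by $ r>0 $ rescales $ \alpha^* = 1_N/N \oslash [K(x)\beta^*] $ by $ 1/r $), hence descends to a genuine continuous function of $ x $, and therefore so does $ \Phi $.

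With these two facts, Brouwer's fixed-point theorem applied to the continuous self-map $ \Phi : \calC^N \to \calC^N $ produces an equilibrium $ x^\sfe \in \calC^N $ satisfying \eqref{eq:equilibrium_iff}. The corresponding $ \beta^\sfe $ is then taken to be the Sinkhorn fixed point $ \beta^*(K(x^\sfe)) \in \bbR_{>0}^N/{\sim} $, which makes $ (x^\sfe, \beta^\sfe) $ an equilibrium of the full dynamics \eqref{eq:x_global}, \eqref{eq:beta_global}: the $ x $-equation holds because $ x^\sfe = \Phi(x^\sfe) $ together with the invertibility of $ I_n - \bar{A}_i $, while $ f_2(x^\sfe, \beta^\sfe) = \beta^\sfe $ reduces exactly to the Sinkhorn fixed-point relations \eqref{eq:optimal_beta} once $ f_1(x^\sfe, \beta^\sfe) = x^\sfe $ is inserted. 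I expect the main obstacle to be the continuity of $ x \mapsto P^*(x) $: one must argue on the quotient cone $ \bbR_{>0}^N/{\sim} $ that the Hilbert-metric contraction, and hence its fixed point, depends continuously on $ K $, and then verify that passing from $ (\alpha^*,\beta^*) $ to $ P^* $ removes the scaling ambiguity so that $ \Phi $ is continuous as a map into $ \bbR^{nN} $; everything else is a routine application of Brouwer's theorem.
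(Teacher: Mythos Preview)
Your proof is correct and follows essentially the same route as the paper: reduce \eqref{eq:equilibrium_iff} to a fixed-point equation for the barycentric-projection map on the convex hull $\mathrm{conv}\{x_j^\sfd\}^N$ and apply Brouwer's theorem. The only difference is that the paper simply asserts continuity of this map (citing \cite[Theorem~4.2]{Peyre2019} for uniqueness of the Sinkhorn scalings), whereas you supply the argument explicitly via the Hilbert-metric contraction and continuous dependence of fixed points on parameters.
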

\begin{proof}
	Note that if a point $ x^{\sfe}\in \bbR^{nN} $ satisfies \eqref{eq:equilibrium_iff}, the corresponding $ \beta^\sfe \in \bbR_{> 0}^N / {\sim} $ is uniquely determined by $ \beta^\sfe = \beta^* $ in \eqref{eq:optimal_beta} with $ x= x^\sfe $~\cite[Theorem~4.2]{Peyre2019}.
	Define a continuous map $ h : \bbR^{nN} \rightarrow \bbR^{nN} $ as
	\begin{equation}
	h(x) := 
	\begin{bmatrix}
	  N\sum_{j=1}^N P_{1j}^* (x) x_j^\sfd \\
	\vdots \\
	  N\sum_{j=1}^N P_{Nj}^* (x) x_j^\sfd
	\end{bmatrix}
	, \ x \in \bbR^{nN} . \label{eq:f_equilibrium}
	\end{equation}
	From \eqref{eq:equilibrium_iff}, fixed points of $ h $ are equilibria of \eqref{eq:x_global}, \eqref{eq:beta_global}.
	Note that for any $ i \in \bbra{N}$ and any $ x \in \bbR^{nN} $, $ N\sum_{j=1}^N P_{ij}^* (x) x_j^\sfd $ belongs to the convex hull $ \bbX $ of $ \{x_j^\sfd\}_j $.
	For brevity, we abuse notation and regard $ \bbX^N $ as a subset of $ \bbR^{nN} $.
	Let $ h_{\bbX} : \bbX^N \rightarrow \bbX^N $ be the restriction of $ h $ in \eqref{eq:f_equilibrium} to $ \bbX^N $.
	Now we can utilize Brouwer's fixed point theorem. That is, since $ h_{\bbX} $ is a continuous map from a compact convex set $ \bbX^N $ into itself, there exists a point $ x^{\sfe} \in \bbX^N $ such that $ x^{\sfe} = h(x^\sfe) $.
\end{proof}

Sometimes, in order to emphasize the dependence of $ (x^\sfe, \beta^\sfe) $ on $ \varepsilon $, we write $ (x^\sfe (\varepsilon), \beta^\sfe (\varepsilon)) $.

\subsection{Asymptotic stability for Sinkhorn MPC}
Next, we analyze the stability of the equilibrium points.
For this purpose, the following lemma is crucial when $ \varepsilon $ is small.
\rev{Due to the limited space, we omit the proof.}
\begin{lemma}\label{lemma:exponential_convergence}
	Assume that $ x_i^\sfd \neq x_j^\sfd $ for all $(i,j), \ i\neq j $.
	For a permutation $ \sigma : \bbra{N} \rightarrow \bbra{N} $, define $ x^{\sfd} (\sigma) := \rev{[x_{\sigma(1)}^{\sfd}; \ \cdots \ ;x_{\sigma(N)}^{\sfd} ]} $ and a permutation matrix $ P^\sigma = (P_{ij}^\sigma)$ as $ P_{ij}^\sigma := 1/N $ if $ j= \sigma(i) $, and $ 0 $, otherwise.
	Then, there exists an equilibrium $ (x^\sfe(\varepsilon), \beta^\sfe(\varepsilon)) $ of \eqref{eq:x_global}, \eqref{eq:beta_global} such that $ x^\sfe (\varepsilon) $ and $ P^* (x^\sfe (\varepsilon)) $ converge exponentially to $ x^{\sfd} (\sigma) $ and $ P^\sigma $, respectively, as $ \varepsilon \rightarrow +0 $, i.e., there exists $ \zeta > 0 $ such that
	\[
		\lim_{\varepsilon \rightarrow +0} \frac{\|\eta(\varepsilon)  \|_2}{\exp (-\zeta/\varepsilon)} = 0
	\]
	for $ \eta (\varepsilon) = x^\sfe (\varepsilon) - x^\sfd (\sigma) $ and $ \eta (\varepsilon) = P^*(x^\sfe (\varepsilon)) - P^\sigma$.
	\hfill $ \triangle $
\end{lemma}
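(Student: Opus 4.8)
The plan is to produce, for each fixed $\sigma$, an equilibrium by locating a fixed point of the map $h$ in \eqref{eq:f_equilibrium} in a small neighborhood of $x^\sfd(\sigma)$, and to bound its distance to $x^\sfd(\sigma)$ quantitatively. Write $d_{ij}(x) := \|x_i - x_j^\sfd\|_{\calG_i}^2$, so that $K_{ij}(x) = \exp(-d_{ij}(x)/\varepsilon)$. The starting observation is that at $x = x^\sfd(\sigma)$ one has $d_{i,\sigma(i)} = 0$ for every $i$, while $d_{ij} > 0$ for $j \neq \sigma(i)$ because the targets are distinct and $\calG_i \succ 0$. Hence the cost matrix $(d_{ij}(x^\sfd(\sigma)))$ makes $\sigma$ the \emph{unique} minimum-cost assignment, and since there are only finitely many permutations this optimality persists, with a uniform positive cost gap, on a fixed closed ball $B := \overline{B_{r_0}}(x^\sfd(\sigma)) \subset \bbR^{nN}$ once $r_0$ is small. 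First I would fix such an $r_0$, so that for all $x \in B$ the permutation $\sigma$ is the unique optimal assignment for $d(x)$.

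The technical heart is to upgrade this combinatorial gap into an \emph{exponential} bound on the off-support entries of the entropic plan, uniformly over $B$. Writing $P_{ij}^*(x) = \exp((f_i + g_j - d_{ij}(x))/\varepsilon)$ with Sinkhorn potentials $f_i := \varepsilon\log\alpha_i^*$ and $g_j := \varepsilon\log\beta_j^*$, I would use that, as $\varepsilon \to +0$, these potentials converge to optimal Kantorovich potentials of the assignment LP for the cost $d(x)$; strict complementary slackness (available because $\sigma$ is the unique optimum) then yields $f_i + g_j - d_{ij}(x) \le -\gamma'$ for all $j \neq \sigma(i)$, with some $\gamma' > 0$ that is uniform over $x \in B$ and all small $\varepsilon$ after a compactness argument. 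Consequently
\[
P_{ij}^*(x) \le \exp(-\gamma'/\varepsilon), \quad j \neq \sigma(i), \ x \in B,
\]
for all sufficiently small $\varepsilon$. The main obstacle is exactly making this decay uniform in $x \in B$ and in $\varepsilon$, i.e. controlling the scalings $\alpha^*(x), \beta^*(x)$ simultaneously over the whole ball; this is where the convergence of entropic to Kantorovich potentials, together with continuity of the LP dual in the cost, must be invoked with care.

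Granting the uniform bound, the conclusion is immediate. Since each row of $N P^*(x)$ sums to $1$, we have $h_i(x) - x_{\sigma(i)}^\sfd = N\sum_{j \neq \sigma(i)} P_{ij}^*(x)\,(x_j^\sfd - x_{\sigma(i)}^\sfd)$, so $\|h(x) - x^\sfd(\sigma)\|_2 \le C\exp(-\gamma'/\varepsilon)$ uniformly on $B$, where $C$ depends only on $N$ and $\max_{i,j}\|x_i^\sfd - x_j^\sfd\|$. For $\varepsilon$ small the right-hand side is below $r_0$, hence $h(B) \subseteq B$; as $B$ is compact and convex and $h$ is continuous, Brouwer's theorem (exactly as in the proof of Proposition~\ref{prop:fixed_point}) furnishes a fixed point $x^\sfe(\varepsilon) \in B$, which satisfies \eqref{eq:equilibrium_iff} and is therefore an equilibrium. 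Because $x^\sfe = h(x^\sfe)$ lies in $h(B)$, the same estimate gives $\|x^\sfe(\varepsilon) - x^\sfd(\sigma)\|_2 \le C\exp(-\gamma'/\varepsilon)$. Finally, the off-support bound together with the row-sum identity $P_{i,\sigma(i)}^* = 1/N - \sum_{j\neq\sigma(i)} P_{ij}^*$ shows that every entry of $P^*(x^\sfe(\varepsilon)) - P^\sigma$ is $O(\exp(-\gamma'/\varepsilon))$, whence $\|P^*(x^\sfe(\varepsilon)) - P^\sigma\|_2 = O(\exp(-\gamma'/\varepsilon))$, while the associated $\beta^\sfe(\varepsilon) \in \bbR_{>0}^N/{\sim}$ is the one uniquely determined by \eqref{eq:optimal_beta} \cite[Theorem~4.2]{Peyre2019}. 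Choosing any $\zeta \in (0, \gamma')$ then yields the claimed limit for both choices of $\eta(\varepsilon)$.
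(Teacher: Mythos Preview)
The paper omits the proof of this lemma entirely (``Due to the limited space, we omit the proof''), so there is no in-paper argument to compare against. On its own merits your strategy is sound: localize the fixed-point map $h$ of \eqref{eq:f_equilibrium} near $x^\sfd(\sigma)$, show that $h$ sends a small closed ball $B$ into an exponentially small neighborhood of $x^\sfd(\sigma)$, and apply Brouwer exactly as in Proposition~\ref{prop:fixed_point} to extract an equilibrium obeying the same bound. Your reduction of both claimed limits to the single uniform estimate $P_{ij}^*(x)\le\exp(-\gamma'/\varepsilon)$ for $j\neq\sigma(i)$, $x\in B$, together with the row-sum identity $P^*_{i,\sigma(i)}=1/N-\sum_{j\neq\sigma(i)}P^*_{ij}$, is clean and correct, and the final choice of any $\zeta\in(0,\gamma')$ does yield the stated $o(\exp(-\zeta/\varepsilon))$ decay.

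Two points deserve tightening. First, your justification of strict complementary slackness is slightly off: uniqueness of the primal optimum $P^\sigma$ does \emph{not} force every Kantorovich dual solution to be strictly complementary (already for $N=2$ one can place a dual optimum on the boundary of the dual face). What you actually need, and what is true, is that the \emph{specific} limit of the entropic potentials lies in the relative interior of the dual optimal face and is therefore strictly complementary; this is the classical interior-point/entropic-regularization result of Cominetti--San~Mart\'{\i}n, and you should invoke it directly rather than appeal to primal uniqueness. The same reference also gives the quantitative convergence of $(f^\varepsilon,g^\varepsilon)$ to the limit potentials with constants depending continuously on the cost, which is exactly what your ``uniform in $x\in B$'' compactness step requires. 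Second, your claim that $d_{ij}(x^\sfd(\sigma))>0$ for $j\neq\sigma(i)$ relies on $\calG_i\succ0$, i.e.\ on $A_i$ being invertible; this hypothesis appears only in the theorem that uses the lemma, not in the lemma as stated, so you are implicitly reading the lemma under that additional assumption. With these two caveats addressed, the argument goes through.
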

Denote by $ {\rm Exp}(\sigma) $ the set of all equilibria \fin{$ (x^\sfe (\cdot), \beta^\sfe (\cdot)) $} of \eqref{eq:x_global}, \eqref{eq:beta_global} having the property in Lemma~\ref{lemma:exponential_convergence} for a permutation $ \sigma $.

For $ \bar{P} \in \bbR^{N\times N} $ and $ x  = \rev{[x_1; \ \cdots \ ;x_N]} \in \bbR^{nN} $, define
\begin{align*}
	V_{\rm x}(x) := \sum_{i=1}^{N} \Bigl\| x_i - N \sum_{j=1}^{N} \bar{P}_{ ij} x_j^\sfd \Bigr\|_{\calG_i}^2 .  
\end{align*}
Then, $ V_{\rm x} $ is a Lyapunov function of \eqref{eq:x_global} where $ \tilde{P}(x,\beta) $ is fixed by $ \bar{P} $~\cite{Mayne2000}.
Indeed, we have
\begin{align*}
	&V_{\rm x} (x(k+1)) - V_{\rm x} (x(k)) \le - \sum_{i=1}^N W_{1,i} (x_i(k), \bar{P}) \\
	&W_{1,i} (x_i, \bar{P}) := \Bigl\| B_i^\top (A_i^\top)^{T_h -1} G_{i,T_h}^{-1} A_i^{T_h} \\
	&\hspace{4cm} \times \bigl(x_i - N\sum_{j=1}^N \bar{P}_{ij} x_j^\sfd \bigr) \Bigr\|^2 . 
\end{align*}
Given an equilibrium $ (x^\sfe, \beta^\sfe) $, let us take the optimal coupling $ P^\sfe := P^* (x^{\sfe}) $ as $ \bar{P} $, and for $ \gamma > 0 $, define
\begin{equation}
V(x, \beta) :=  V_{\rm x} (x) +  \gamma \dhil ( \beta, \beta^\sfe ), \ (x,\beta)\in \bbR^{nN} \times  (\bbR_{>0}^N / {\sim} ) . 
\end{equation}
The following theorem follows from the fact that, for sufficiently small or large $ \varepsilon > 0$ and large $ \gamma > 0 $, $ V $ behaves as a Lyapunov function of \eqref{eq:x_global},~\eqref{eq:beta_global} with respect to $ (x^\sfe, \beta^\sfe) $.

\begin{theorem}
	Assume that for all $ i\in \bbra{N} $, $ A_i $ is invertible.
	Then the following hold:
	\begin{itemize}
		\item[(i)] Assume that $ (x^\sfe, \beta^\sfe) $ is an isolated equilibrium\footnote{An equilibrium is said to be isolated if it has a neighborhood which does not contain any other equilibria.} of \eqref{eq:x_global}, \eqref{eq:beta_global}. Then, for a sufficiently large $ \varepsilon > 0 $, $ (x^\sfe, \beta^\sfe) $ is locally asymptotically stable.
		\item[(ii)]  
		Assume that $ x_i^\sfd \neq x_j^\sfd $ for all $(i,j), \ i\neq j $. \fin{Assume further that for some $ \varepsilon' > 0 $, $ (x^\sfe (\varepsilon'), \beta^\sfe(\varepsilon')) $ is an isolated equilibrium of \eqref{eq:x_global}, \eqref{eq:beta_global} and for some permutation $ \sigma $, $ (x^\sfe (\cdot), \beta^\sfe(\cdot)) \in {\rm Exp}(\sigma) $.}
		Then, for sufficiently small $ \varepsilon >0$, $ (x^\sfe (\varepsilon), \beta^\sfe(\varepsilon)) $ is locally asymptotically stable.
	\end{itemize}
\end{theorem}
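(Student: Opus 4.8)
The plan is to treat $V(x,\beta)=V_{\rm x}(x)+\gamma\,\dhil(\beta,\beta^\sfe)$ as a strict Lyapunov function for the coupled map \eqref{eq:x_global}, \eqref{eq:beta_global} and to choose $\gamma>0$ together with the admissible range of $\varepsilon$ so that $V$ strictly decreases in a punctured neighborhood of the isolated equilibrium. First I would record that, since $A_i$ is invertible, $G_{i,T_h}\succeq B_iB_i^\top\succ0$ and $A_i^{T_h}$ is nonsingular, so $\calG_i\succ0$ and each $W_{1,i}(\cdot,P^\sfe)$ is a positive-definite quadratic form. Using the equilibrium identity \eqref{eq:equilibrium_iff}, $N\sum_j P^\sfe_{ij}x_j^\sfd=x_i^\sfe$, so with $\bar P=P^\sfe$ we have $V_{\rm x}(x)=\sum_i\|x_i-x_i^\sfe\|_{\calG_i}^2$, which vanishes only at $x^\sfe$; together with the fact that $\dhil(\cdot,\beta^\sfe)$ is a genuine distance on $\bbR_{>0}^N/{\sim}$ vanishing only at $\beta^\sfe$, this makes $V$ positive definite relative to $(x^\sfe,\beta^\sfe)$.

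Next I would estimate $\Delta V:=V(x(k+1),\beta(k+1))-V(x(k),\beta(k))=\Delta V_{\rm x}+\gamma\,\Delta V_\beta$. Writing $e_i:=x_i-x_i^\sfe$ and $s_i:=N\sum_j(\tilde P_{ij}(x,\beta)-P^\sfe_{ij})x_j^\sfd$, the dynamics gives $e_i(k+1)=\bar A_i e_i(k)+(I-\bar A_i)s_i(k)$, and the Lyapunov property of $V_{\rm x}$ for the frozen target $\bar P=P^\sfe$ (the identity $\|\bar A_i y\|_{\calG_i}^2-\|y\|_{\calG_i}^2=-W_{1,i}(x_i,P^\sfe)$) yields
\[
\Delta V_{\rm x}=-\sum_i W_{1,i}+2\sum_i e_i^\top\bar A_i^\top\calG_i(I-\bar A_i)s_i+\sum_i\|(I-\bar A_i)s_i\|_{\calG_i}^2 .
\]
Since $\tilde P$ is smooth with $\tilde P(x^\sfe,\beta^\sfe)=P^\sfe$, locally $\|s_i\|\le L_x\|e\|+L_\beta\dhil(\beta,\beta^\sfe)$, so the perturbation terms are $O(L_x\|e\|^2)+O((L_x+L_\beta)\|e\|\dhil)+O((L_x^2+L_\beta^2)\dhil^2)$, while $-\sum_i W_{1,i}\le-c_1\|e\|^2$ for some $c_1>0$. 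For $\Delta V_\beta$ I would apply the contraction \eqref{ineq:hilbert_metric} at the equilibrium kernel, obtaining $\dhil(\beta(k+1),\beta^\sfe)\le\lambda^2(K(x^\sfe))\dhil(\beta(k),\beta^\sfe)+M_x\|e\|$, i.e. $\Delta V_\beta\le-(1-\lambda^2(K(x^\sfe)))\dhil+M_x\|e\|$, where $M_x$ is the sensitivity of the $\beta$-update to $x$.

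For part (i) I would exploit that as $\varepsilon\to\infty$ the Gibbs kernel tends to the all-ones matrix, so $\eta(K)\to1$, $\lambda(K(x^\sfe))\to0$ (strong $\beta$-contraction), and every derivative of $K$ in $x$ is $O(1/\varepsilon)$, whence $L_x,M_x=O(1/\varepsilon)$ while $L_\beta=O(1)$. Taking $\gamma$ large makes $-\gamma(1-\lambda^2)\dhil$ dominate the $O(1)\dhil^2$ and the cross term $O(1)\|e\|\dhil$ (split by Young's inequality, its $\|e\|^2$ part absorbed into $-c_1\|e\|^2$); the residual $x\to\beta$ forcing $\gamma M_x\|e\|=O(\gamma/\varepsilon)\|e\|$ is then dominated by $-c_1\|e\|^2$ on a fixed neighborhood once $\varepsilon$ is large relative to $\gamma$ — essentially a small-gain reading, the loop gain being $O(1/\varepsilon)<1$. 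For part (ii) I would invoke Lemma~\ref{lemma:exponential_convergence}: on ${\rm Exp}(\sigma)$ we have $x^\sfe(\varepsilon)\to x^\sfd(\sigma)$ and $P^*(x^\sfe)\to P^\sigma$ with rate $\exp(-\zeta/\varepsilon)$, so each matched entry satisfies $\nabla_{x_i}\|x_i^\sfe-x_{\sigma(i)}^\sfd\|_{\calG_i}^2\to0$ while every unmatched entry of $K(x^\sfe)$ is $O(\exp(-\zeta/\varepsilon))$; consequently $L_x,L_\beta,M_x$ and the $\beta\to x$ coupling $\partial_\beta\tilde P$ all vanish as $\varepsilon\to+0$ and the subsystems decouple. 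In both cases, strict negativity of $\Delta V$ on a punctured neighborhood together with the assumed isolation of the equilibrium yields local asymptotic stability by the standard discrete-time Lyapunov theorem.

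The hard part is reconciling the quadratic $V_{\rm x}$ with the first-order metric $\dhil$ in the presence of the genuinely nonzero linear coupling $M_x$: because $\dhil(\cdot,\beta^\sfe)$ behaves like a norm, $\Delta V$ carries a first-order term $\gamma M_x\|e\|$ along the $x$-direction that a naive sum cannot offset against the second-order decrease $-c_1\|e\|^2$. The entire argument therefore hinges on making this coupling small — $O(1/\varepsilon)$ for large $\varepsilon$, exponentially small for small $\varepsilon$ — so that it is dominated on a fixed neighborhood. The most delicate point is part (ii): as $\varepsilon\to+0$ the support of $P^\sfe$ collapses onto the matching $\sigma$, the support graph degenerates into disjoint edges, and the Hilbert contraction degrades, $1-\lambda^2(K(x^\sfe))\to0$. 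One must verify that the coupling vanishes at a strictly faster (exponential) rate $\zeta$ than the contraction deficit decays, so that block-diagonal dominance of the interconnection still produces a net decrease for small $\varepsilon$. Quantifying this rate comparison — showing the exponent governing the contraction deficit is smaller than the $\zeta$ furnished by Lemma~\ref{lemma:exponential_convergence} — is where the main technical effort lies.
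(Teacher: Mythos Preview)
Your plan is essentially the paper's: the same Lyapunov candidate $V=V_{\rm x}+\gamma\,\dhil(\beta,\beta^\sfe)$, the same mechanism (show that the $x\!\leftrightarrow\!\beta$ coupling vanishes for extreme $\varepsilon$, then pick $\gamma$ and $\varepsilon$), and the same appeal to Lemma~\ref{lemma:exponential_convergence} for part~(ii). The one bookkeeping difference worth noting is that the paper centers the $V_{\rm x}$-increment on the \emph{instantaneous} coupling $\tilde P(x,\beta)$ rather than on the frozen $P^\sfe$: writing $a_i=x_i-N\sum_j\tilde P_{ij}x_j^\sfd$ and $b_i=N\sum_j(\tilde P_{ij}-P^\sfe_{ij})x_j^\sfd$, the paper obtains $-W_{1,i}(x_i,\tilde P)+W_{2,i}$ with $W_{2,i}=2a_i^\top(\bar A_i-I)^\top\calG_i b_i$, already a product of two quantities vanishing at the equilibrium and hence second-order by construction; your version reaches the same conclusion via the Lipschitz bound $\|s_i\|\le L_x\|e\|+L_\beta\dhil$. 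For the $\beta$-part the paper splits via the triangle inequality into $-W_3+W_4+W_5$, with $W_3=[1-\lambda(K(x))\lambda(K(f_1))]\dhil(\beta,\beta^\sfe)$ carrying the contraction and $W_4,W_5$ depending on $(x,\beta)$ only through $K$; it then divides $W$ into two halves, one forced negative by large $\gamma$, the other by small (resp.\ large) $\varepsilon$.

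The two difficulties you single out at the end---the quadratic-versus-first-order mismatch between $V_{\rm x}$ and $\dhil$ (so that a term like $\gamma M_x\|e\|$ cannot be absorbed by $-c_1\|e\|^2$ near $e=0$), and the degeneration $\lambda(K(x^\sfe))\to1$ as $\varepsilon\to+0$---are genuine, but the paper's abbreviated proof does not resolve them either: it simply asserts that the ``variation'' of $W_4,W_5$ around the equilibrium can be made arbitrarily small and that a suitable punctured neighborhood $B_{r_2}$ exists. Your sketch is therefore at the same level of rigor as the paper, and your diagnosis of where the remaining technical effort lies is accurate.
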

\begin{proof}
	We prove only (ii) as the proof is similar for (i).
	In this proof, we regard $ (x (\cdot), \beta(\cdot) ) $ as a trajectory in a metric space $ \bbR^{nN} \times (\bbR_{>0}^N / {\sim} ) $ with metric $ d((x,\beta), (x',\beta')) := \|x - x'\| + \dhil (\beta, \beta') $.
	Fix any $ (x^\sfe, \beta^\sfe) \in {\rm Exp}(\sigma) $ satisfying the assumption in (ii).
	By definition, it is trivial that $ V $ is positive definite on a neighborhood of $ (x^\sfe, \beta^\sfe) $.
	Moreover, for any $ (x,\beta) \in \bbR^{nN} \times (\bbR_{>0}^N / {\sim})$, we have
	\begin{align*}
	&V(f_1 (x, \beta) , f_2 (x, \beta) ) - 	V(x , \beta ) \\
	&\le \sum_{i=1}^{N} \biggl\{ \Bigl\| \bar{A}_i \Bigl(x_i  -  N \sum_j \tilde{P}_{ij}(x,\beta)  x_j^{\sfd} \Bigr) \\
	&+ N\sum_j (\tilde{P}_{ij}(x,\beta) - P_{ ij}^\sfe  )x_j^{\sfd}  \Bigr\|_{\calG_i}^2 -  \Bigl\|x_i  - N\sum_j P_{ ij}^\sfe x_j^\sfd \Bigr\|_{\calG_i}^2 \biggr\}  \\
	&\quad + \gamma ( - W_3 (x,\beta) + W_4(x,\beta) + W_5 (x,\beta) ) \\
	&\le \sum_{i=1}^{N} \left(- W_{1,i}(x_i, \tilde{P}(x,\beta)) + W_{2,i}(x ,\beta) \right) \\
	&\qquad + \gamma ( - W_{3}(x,\beta) + W_{4}(x,\beta) + W_{5} (x,\beta)) =: W(x,\beta),
	\end{align*}
	where we used the triangle inequality for $ \dhil $, and
	\begin{align*}
		&W_{2,i} (x,\beta) := 2\Bigl( x_i  - N\sum_{j\in \bbra{N}} \tilde{P}_{ij}(x,\beta) x_j^\sfd \Bigr)^\top (\bar{A}_i - I_n )^\top \calG_i  \\
		&\qquad\qquad\qquad \times N\sum_{j\in \bbra{N}} ( \tilde{P}_{ij}(x,\beta) - P_{ij}^\sfe) x_j^\sfd , \\
		&W_{3} (x,\beta) := \left[ 1- \lambda (K(x)) \lambda \left(K(f_1 (x,\beta)) \right) \right] \dhil(\beta, \beta^\sfe) , \\
		&W_{4} (x,\beta) :=  \dhil(K(f_1 (x,\beta))^\top \alpha^\sfe, (K^\sfe)^\top \alpha^\sfe) , \\
		&W_{5} (x,\beta) := \lambda \left(K(f_1(x,\beta)) \right) \dhil(K(x)\beta^\sfe, K^\sfe \beta^\sfe), \\
		&K^\sfe := K(x^\sfe), \ \alpha^\sfe := 1_N /N \oslash [K^\sfe \beta^\sfe].
	\end{align*}
	In the sequel, we explain that sufficiently small $ \varepsilon$ and large $\gamma $ enable us to take a neighborhood $ B_r(x^\sfe, \beta^\sfe) $ where
	\begin{equation}\label{ineq:W<0}
	  W(x,\beta) < 0, \ \forall (x,\beta) \in B_r(x^\sfe, \beta^\sfe)\backslash \{(x^\sfe, \beta^\sfe)\},
	\end{equation}
	which means the asymptotic stability of $ (x^\sfe, \beta^\sfe) $ \cite[Theorem~1.3]{Krabs2010}.

	First, a straightforward calculation yields, for any $ i,j\in \bbra{N}, l\in \bbra{n} $ and any $ (x,\beta) \in \bbR^{nN} \times (\bbR_{>0}^N / {\sim}) $,
	\begin{align*}
		&\left|\frac{\partial}{\partial x_{i,l}} \tilde{P}_{ij} (x, \beta) \right| \le \frac{2N \bar{g}_{i,j,l}}{\varepsilon} \tilde{P}_{ij} (x,\beta) \left( \frac{1}{N}  - \tilde{P}_{ij} (x,\beta) \right), \\
		&x_i = [x_{i,1} \ \cdots \ x_{i,n}]^\top, \\
		&\bar{g}_{i,j,l} := \max_{k\neq j} | g_{i,l}^\top (x_j^\sfd - x_k^\sfd) |, \ \calG_i = [g_{i,1} \ \cdots \ g_{i,n}]^\top .
	\end{align*}
	From Lemma~\ref{lemma:exponential_convergence}, under the assumption $ x_i^\sfd \neq x_j^\sfd, \ i\neq j $, $ \tilde{P}_{ij} (x^\sfe (\varepsilon), \beta^\sfe (\varepsilon)) $ converges exponentially to $ 0 $ or $ 1/N $ as $ \varepsilon \rightarrow +0 $.
	Hence, the variation of $ W_{2,i} $ with respect to $ x $ around $ (x^\sfe(\varepsilon), \beta^\sfe (\varepsilon)) $ can be made arbitrarily small by using sufficiently small $ \varepsilon = \varepsilon_1 $.
	 In addition, since $ \gamma > 0 $ can be chosen independently of $ \varepsilon $, sufficiently large $ \gamma = \bar{\gamma} $ enables us to take a neighborhood $ B_{r_1}(x^\sfe, \beta^\sfe) $ where
	 \begin{align}
		&\sum_{i=1}^N  \left(-\frac{1}{2}W_{1,i} \left(x_i, \tilde{P}(x,\beta) \right) + W_{2,i} (x,\beta)  \right) \nonumber\\
		& + \gamma \left(-\frac{1}{2}W_3(x,\beta) \right) < 0, \ 
	 	 \forall (x,\beta) \in B_{r_1}(x^\sfe, \beta^\sfe) \backslash \{(x^\sfe, \beta^\sfe)\}. \label{ineq:W_part1}
	 \end{align}

	Next, it follows from $ (x^\sfe, \beta^\sfe) \in {\rm Exp}(\sigma) $ that
	\begin{align*}
		\nabla_{x_i} K_{ij}|_{x = x^e (\varepsilon)} &= - \frac{2}{\varepsilon} \exp \left( - \frac{\|x_i^\sfe (\varepsilon) - x_j^\sfd \|_{\calG_i}^2}{\varepsilon} \right) \\
		&\qquad \times \calG_i (x_i^\sfe (\varepsilon) - x_j^\sfd) \rightarrow 0,  \ {\rm as} \ \varepsilon \rightarrow +0  .
	\end{align*}
	 Since $ W_4 $ and $ W_5 $ depend on $ (x,\beta) $ only via $ K $, their variation around $ (x^\sfe (\varepsilon) ,\beta^\sfe (\varepsilon)) $ can be made arbitrarily small by taking sufficiently small $ \varepsilon > 0 $. Therefore, under the assumption that $ (x^\sfe(\varepsilon), \beta^\sfe (\varepsilon)) $ is isolated, for any given $ \gamma > 0 $, we can take $ \varepsilon = \varepsilon_2 (\gamma) $ such that there exists a neighborhood $ B_{r_2} (x^\sfe, \beta^\sfe) $ where
	 \begin{align}
		&\hspace{-0.3cm}\sum_{i=1}^N  \left(-\frac{1}{2}W_{1,i} (x_i, \tilde{P}(x,\beta))\right) + \gamma \biggl(-\frac{1}{2}W_3(x,\beta) + W_4(x,\beta)  \nonumber\\
		&\hspace{-0.3cm}  + W_5(x,\beta) \biggr) < 0, 
	 	 \forall (x,\beta) \in B_{r_2}(x^\sfe, \beta^\sfe) \backslash \{(x^\sfe, \beta^\sfe)\}. \label{ineq:W_part2}
	 \end{align}

	 By combining \eqref{ineq:W_part1} and \eqref{ineq:W_part2}, we obtain \eqref{ineq:W<0} for $ r = \min \{r_1, r_2\} $, $ \gamma = \bar{\gamma} $, and $\varepsilon = \min\{\varepsilon_1, \varepsilon_2 (\bar{\gamma})  \}$, which completes the proof.
\end{proof}

\section{Conclusion}\label{sec:conclusion}
\magenta{
In this paper, we presented the concept of Sinkhorn MPC, which combines MPC and the Sinkhorn algorithm to achieve scalable, cost-effective transport over dynamical systems. 
For linear systems with an energy cost, we analyzed the ultimate boundedness and the asymptotic stability for Sinkhorn MPC based on the stability of the constrained MPC and the conventional Sinkhorn algorithm.

On the other hand, in the numerical example, we observed that the regularization parameter plays a key role in the trade-off between the stationary and transient behaviors for Sinkhorn MPC.
Hence, an important direction for future work is to investigate the design of a time-varying regularization parameter to balance the trade-off.
\rev{Another direction is to extend our results to nonlinear systems with state constraints. In addition, the computational complexity still can be a problem for nonlinear systems. 
These problems will be addressed in future work.
}
}

\addtolength{\textheight}{-12cm}   






\bibliographystyle{IEEEtran}

\bibliography{root}

\end{document}